\DeclareMathOperator{\Hom}{Hom}
\DeclareMathOperator{\SL}{SL}
\DeclareMathOperator{\Sp}{Sp}
\DeclareMathOperator{\ZZ}{{\mathbb Z}}
\newtheorem{lem}{Lemma}[section]
\newtheorem*{thm*}{Theorem}
\newtheorem{thm}[lem]{Theorem}
\newtheorem{prop}[lem]{Proposition}
\newtheorem{cor}[lem]{Corollary}
\newtheorem*{cor*}{Corollary}
\theoremstyle{definition}{  \newtheorem{rem}[lem]{Remark}  }
\theoremstyle{definition}{   }
\theoremstyle{definition}{   }
\begin{document}

\title{Chevalley groups over Laurent polynomial rings}
\author{Anastasia Stavrova}
\thanks{}
\address{St. Petersburg Department of Steklov Mathematical Institute, nab. r. Fontanki 27, 191023 St. Petersburg, Russia}
\email{anastasia.stavrova@gmail.com}

\maketitle

\begin{abstract}
Let $G$ be
a simply connected Chevalley--Demazure group scheme without $\SL_2$-factors. For any unital commutative ring $R$,
we denote by $E(R)$ the standard elementary subgroup of $G(R)$, that is, the subgroup generated by the elementary root
unipotent elements. We prove that the map
$$
G(R[x_1^{\pm 1},\ldots,x_n^{\pm 1}])/E(R[x_1^{\pm 1},\ldots,x_n^{\pm 1}])\to G\bigl(R((x_1))\ldots((x_n))\bigr)/E\bigl(R((x_1))\ldots((x_n))\bigr)
$$
is injective for any
$n\ge 1$, if $R$ is either a Dedekind domain or a Noetherian ring that is geometrically regular over a Dedekind domain
with perfect residue fields. For $n=1$ this map is also an isomorphism.
As a consequence, we show that if $D$ is a PID such that
$SL_2(D)=E_2(D)$ (e.g. $D=\ZZ$), then $G(D[x_1^{\pm 1},\ldots,x_n^{\pm 1}])=E(D[x_1^{\pm 1},\ldots,x_n^{\pm 1}])$.
This extends earlier results for special linear and symplectic groups due to A. A. Suslin and V. I. Kopeiko.
\end{abstract}

\section{Introduction}

For any commutative (unital) ring $R$, let $E_N(R)$ denote the elementary
subgroup of $\SL_N(R)$, i.e. the subgroup generated by elementary matrices $I+te_{ij}$, $1\le i\neq j\le N$, $t\in R$.
A. Suslin~\cite[Corollary 7.10]{Sus} established that for any regular ring $R$ such that $SK_1(R)=1$, one has
$$
\SL_N(R[x_1^{\pm 1},\ldots,x_n^{\pm 1},y_1,\ldots,y_m])=E_N(R[x_1^{\pm 1},\ldots,x_n^{\pm 1},y_1,\ldots,y_m])
$$
for any $N\ge max(3,\dim(R)+2)$ and any $n,m\ge 0$. The corresponding statement for $N=2$ is well known to be wrong:
for any field $k$ one has
$$
\SL_2(k[x_1^{\pm 1},\ldots,x_n^{\pm 1},y_1,\ldots,y_m])\neq E_2(R[x_1^{\pm 1},\ldots,x_n^{\pm 1},y_1,\ldots,y_m]),
$$
as soon as $m\ge 2$ or $m=1$ and $n\ge 1$, see e.g.~\cite{Chu}; to the best of our knowledge, it is not known whether
$\SL_2(k[x_1^{\pm 1},x_2^{\pm 1}])=E_2(k[x_1^{\pm 1},x_2^{\pm 1}])$.

Let $D$ be a principal ideal domain (PID for short). Following~\cite{Lam-book06}, we say that $D$ is a {\it special PID},
if $\SL_n(D)=E_n(D)$ for all $n\ge 2$. (In fact, it is enough to require that $\SL_2(D)=E_2(D)$, see Lemma~\ref{lem:stability} below).
Clearly, a special PID $D$ satisfies $SK_1(D)=1$.
Examples of special PIDs are Euclidean domains and localizations
of 2-dimensional regular local rings at a regular parameter, including the rings $A(x)$ for a discrete valuation ring
$A$~\cite[Corollaries 6.2 and 6.3]{Lam-book06}. Any localization of a special PID is also a special
PID~\cite[Corollary 6.4]{Lam-book06}. The following lemma of V. I. Kopeiko provides one more class of examples.

\begin{lem}\cite[Lemma 4]{Kop-lau2}\label{lem:spPID}
If $D$ is a special PID, then $D((x))$ is a special PID.
\end{lem}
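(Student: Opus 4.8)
The plan is to prove two statements: (a) $D((x))$ is a PID, and (b) $\SL_2(D((x)))=E_2(D((x)))$. Together with Lemma~\ref{lem:stability} these say precisely that $D((x))$ is a special PID.

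For (a): write $\Lambda=D[[x]]$, so that $D((x))=\Lambda[x^{-1}]$. An ideal $\mathfrak a$ of $D((x))$ is recovered from $J=\mathfrak a\cap\Lambda$ as $\mathfrak a=J\Lambda[x^{-1}]$, and $J$ is $x$-saturated, i.e. $xf\in J$ implies $f\in J$ (because $x$ is a unit of $D((x))$); so it is enough to show that every nonzero $x$-saturated ideal $J\subseteq\Lambda$ is principal. The image $J_0$ of $J$ under $\Lambda\to D$, $f\mapsto f(0)$, is an ideal of the PID $D$, hence $J_0=cD$; and $c\ne 0$, since otherwise $J\subseteq x\Lambda$, whence $J=xJ$ by saturation, and then $J=0$ by Nakayama, as $x$ lies in the Jacobson radical of the Noetherian ring $\Lambda$. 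Choose $g\in J$ with $g(0)=c$. For any $f\in J$ one finds $\varphi\in\Lambda$ with $f=\varphi g$ by successive approximation: writing $f(0)=d_0c$ with $d_0\in D$, the element $x^{-1}(f-d_0g)$ again lies in $J$, so inductively $f\equiv (d_0+d_1x+\dots+d_{n-1}x^{n-1})g\pmod{x^n\Lambda}$ for all $n$, and $f=(\sum_i d_ix^i)g$ by $x$-adic completeness of $\Lambda$. Thus $J=g\Lambda$, $\mathfrak a=gD((x))$, and $D((x))$ is a PID.

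For (b) I would first note that $\SL_2(\Lambda)=E_2(\Lambda)$: since $(x)$ lies in the Jacobson radical of $\Lambda$ and $\Lambda/(x)=D$ satisfies $\SL_2=E_2$, a matrix $B\in\SL_2(\Lambda)$ can be multiplied by a lift to $E_2(\Lambda)$ of an elementary expression for $\bar B$, leaving a matrix congruent to $I$ modulo $x$; such a matrix has invertible diagonal entries, so row and column operations bring it to $\mathrm{diag}(u,u^{-1})$, which is always a product of elementaries. The substance of the lemma is then the reduction
$$\SL_2(D((x)))=E_2(D((x)))\cdot\SL_2(\Lambda),$$
which combined with the previous sentence gives (b). To prove it one moves the unimodular first column $\binom{a}{c}$ of a given $M\in\SL_2(D((x)))$, by elements of $E_2(D((x)))$, to a column that is unimodular over $\Lambda$, after which $\SL_2(\Lambda)=E_2(\Lambda)$ finishes the job (reduce that column to $\binom{1}{0}$, leaving an upper triangular matrix in $\SL_2$, which is elementary over $D((x))$). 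Here I would run a Euclidean-type algorithm on $\binom{a}{c}$: the diagonal matrices $\mathrm{diag}(x,x^{-1})$, $\mathrm{diag}(x^{-1},x)\in E_2(D((x)))$ let one shift the $x$-adic valuations of the two entries; whenever those valuations coincide one applies a matrix from $\SL_2(D)=E_2(D)$ built from a Bézout relation for the leading coefficients to raise one valuation; this brings $a,c$ into $\Lambda$, after which one reduces modulo $x$ and again invokes $\SL_2(D)=E_2(D)$ to peel off prime factors of the leading coefficient, alternating with further $\mathrm{diag}(x^{\pm1},x^{\mp1})$ moves to access higher $x$-adic coefficients. When the leading coefficient becomes a unit of $D$ the column is unimodular over $\Lambda$.

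The routine parts are (a) and the equality $\SL_2(\Lambda)=E_2(\Lambda)$. The main obstacle is the termination of the Euclidean-type reduction: one needs a monovariant — the natural candidate being the length of $\Lambda/(a,c)\Lambda$, which is finite once $a,c\in\Lambda$ are arranged not both divisible by $x$ and vanishes exactly when $\binom{a}{c}$ is unimodular over $\Lambda$ — and one must check that it strictly drops after each suitably chosen $\mathrm{diag}(x^{\pm1},x^{\mp1})$ step, the intervening $E_2(\Lambda)$-moves merely preparing the configuration; the delicate case is the one in which the leading coefficient of one entry divides that of the other. This is also precisely where the hypothesis that $D$ is a \emph{special} PID, rather than merely a PID with $SK_1(D)=1$, is used, through the unstable identity $\SL_2(D)=E_2(D)$ and its lift to $E_2(\Lambda)\subseteq E_2(D((x)))$.
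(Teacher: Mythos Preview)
The paper does not prove this lemma; it is quoted from Kopeiko~\cite{Kop-lau2} and used as a black box, so there is nothing in the paper to compare your reconstruction against.

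On the merits: part~(a) and the equality $\SL_2(D[[x]])=E_2(D[[x]])$ are correct and complete. The genuine gap is exactly the one you flag, and it is not just bookkeeping. Already the column $\bigl(\begin{smallmatrix}2\\3x\end{smallmatrix}\bigr)$ over $D=\ZZ$ illustrates the difficulty: it lies in $\Lambda^2$ and is unimodular over $\ZZ((x))$, but its reduction modulo $x$ is $\bigl(\begin{smallmatrix}2\\0\end{smallmatrix}\bigr)$, which is not unimodular over $\ZZ$, so there is no matrix from $\SL_2(\ZZ)=E_2(\ZZ)$ to apply; the $\mathrm{diag}(x^{\pm1},x^{\mp1})$ moves send it to $\bigl(\begin{smallmatrix}2x\\3\end{smallmatrix}\bigr)$ or out of $\Lambda^2$, and the proposed monovariant does not drop (one checks $(2,3x)=(2,x)$ and $(3,2x)=(3,x)$ in $\Lambda$, so both quotients have length~$1$). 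This particular column is of course dispatched by a single elementary transvection with coefficient in $x^{-1}\ZZ$, but that only shows that your list of primitive moves is incomplete and that the length invariant does not control the moves you would actually need. Supplying a working invariant, or reorganizing the algorithm so that it provably terminates, is the real content of Kopeiko's lemma, and your sketch stops short of doing this.
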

Suslin's theorem cited above implies that, for any special PID $D$, one has
$$
\SL_N(D[x_1^{\pm 1},\ldots,x_n^{\pm 1},y_1,\ldots,y_m])=E_N(D[x_1^{\pm 1},\ldots,x_n^{\pm 1},y_1,\ldots,y_m])
$$
for any $N\ge 3$ and any $n,m\ge 0$. The main result of~\cite{Kop-lau2} is that for any special PID $D$, one has
$$
\Sp_{2N}(D[x_1^{\pm 1},\ldots,x_n^{\pm 1},y_1,\ldots,y_m])=Ep_{2N}(D[x_1^{\pm 1},\ldots,x_n^{\pm 1},y_1,\ldots,y_m])
$$
for any $N\ge 2$ and any $n,m\ge 0$.
Our aim is to extend these two results to all simply connected semisimple Chevalley--Demazure group schemes of isotropic
rank $\ge 2$.

By a Chevalley--Demazure group scheme we mean a split reductive group scheme in the sense of~\cite{SGA3}. These group
schemes are defined over $\ZZ$. Their groups of points are usually called just Chevalley groups.
We say that a Chevalley--Demazure group scheme $G$ has {\it isotropic rank $\ge n$} if and only if every irreducible component of its root system has rank $\ge n$.
For any commutative ring $R$ with 1 and any fixed choice of a pinning, or \'epinglage of $G$ in the sense of~\cite{SGA3},
we denote by $E$ the elementary subgroup functor of $G$. That is, $E(R)$ is the subgroup of $G(R)$ generated
by elementary root unipotent elements $x_\alpha(r)$, $\alpha\in\Phi$, $r\in R$, in the notation of~\cite{Che55,Ma},
where $\Phi$ is the root system of $G$. If $G$ has isotropic rank $\ge 2$, then $E$ is independent of the
choice of the pinning and normal in $G(R)$~\cite{Tad,PS}.
If a simply connected Chevalley--Demazure group scheme $G$ has isotropic rank $1$, it means that $G$ is a direct
product of several simply connected
Chevalley--Demazure group schemes including at least one factor isomorphic to $\SL_2$.
If this is the case, then we always assume that such an isomorphism is fixed throughout every specific
argument, and the corresponding direct factor of $E(R)$ is a fixed
standard elementary subgroup $E_2(R)$ of $\SL_2(R)$; this is to
take care of the fact that $E_2(R)$ is not, in general, a normal subgroup of $\SL_2(R)$.
We also denote
$$
K_1^G(R)=G(R)/E(R);
$$ this is a group if the isotropic rank of $G$ is $\ge 2$,
and a pointed set otherwise.

Our main result is the following theorem.
\begin{thm}\label{thm:sPID}
Let $D$ be a special PID.
Let $G$ be a simply connected Chevalley--Demazure group scheme of isotropic rank $\ge 2$.
Then $K_1^G(D[x_1^{\pm 1},\ldots,x_n^{\pm 1},y_1,\ldots,y_m])=K_1^G(D)=1$ for any $m,n\ge 0$.
\end{thm}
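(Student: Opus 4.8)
The plan is to deduce Theorem~\ref{thm:sPID} from the main injectivity theorem, Lemma~\ref{lem:spPID}, and homotopy invariance of $K_1^G$, treating the two blocks of variables one at a time. First I would remove the polynomial variables. A special PID is a regular domain of Krull dimension $\le 1$, so $A:=D[x_1^{\pm 1},\ldots,x_n^{\pm 1}]$ is a regular Noetherian ring, and stripping the $y_i$ off one at a time by homotopy invariance of the non-stable $K_1$-functor $K_1^G$ over regular rings reduces the statement to the case $m=0$:
$$
K_1^G\bigl(D[x_1^{\pm 1},\ldots,x_n^{\pm 1},y_1,\ldots,y_m]\bigr)=K_1^G\bigl(D[x_1^{\pm 1},\ldots,x_n^{\pm 1}]\bigr).
$$
(When $D$ itself has perfect residue fields one can instead apply the main theorem directly with base ring $D[y_1,\ldots,y_m]$, which is smooth, hence geometrically regular, over the Dedekind domain $D$.)

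For $m=0$ and $n\ge 1$: a PID is a Dedekind domain, so the main theorem applies with $R=D$ and produces an injection
$$
K_1^G\bigl(D[x_1^{\pm 1},\ldots,x_n^{\pm 1}]\bigr)\hookrightarrow K_1^G\bigl(D((x_1))\ldots((x_n))\bigr).
$$
By Lemma~\ref{lem:spPID}, $D((x_1))$ is again a special PID, and iterating, $D((x_1))\ldots((x_n))$ is a special PID. The theorem is thereby reduced to its case $m=n=0$, that $K_1^G(D')=1$ for every special PID $D'$ and every simply connected $G$ of isotropic rank $\ge 2$. This I would establish via the stability theory for non-stable $K_1$-functors: since the isotropic rank of $G$ exceeds the Krull dimension of $D'$ (which is $\le 1$), $K_1^G(D')$ coincides with the corresponding stable $K_1$-group, and the latter vanishes for a regular ring of dimension $\le 1$ with $SK_1(D')=1$; hence $G(D')=E(D')$. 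Feeding $D'=D((x_1))\ldots((x_n))$ into the injection above trivializes its target, so $K_1^G(D[x_1^{\pm 1},\ldots,x_n^{\pm 1}])=1$, and together with the first paragraph this yields $K_1^G(D[x_1^{\pm 1},\ldots,x_n^{\pm 1},y_1,\ldots,y_m])=K_1^G(D)=1$.

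Granting the main injectivity theorem, which is the genuinely difficult input, the rest is essentially formal, but two points deserve care. First, the homotopy-invariance step is applied over the regular ring $D[x_1^{\pm 1},\ldots,x_n^{\pm 1}]$, which need not contain a field (for instance when $D=\ZZ$); one therefore cannot simply quote the equal-characteristic homotopy invariance of $K_1^G$ and must use a form valid for regular rings in mixed characteristic, which in turn rests on Grothendieck--Serre-type statements for the split group $G$ over such bases. Second, passing from the defining condition of a special PID, which concerns $\SL_N$ (and, by Lemma~\ref{lem:stability}, already follows from $\SL_2(D)=E_2(D)$), to an arbitrary simply connected $G$ of isotropic rank $\ge 2$ uses the type-by-type stability and vanishing results for $K_1^G$ over rings of small Krull dimension. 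Both inputs are standard once located, but should be invoked explicitly.
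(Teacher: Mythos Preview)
Your outline matches the paper's proof: strip the $y_j$, apply Theorem~\ref{thm:ALau-inj} to embed into $K_1^G$ of the iterated Laurent series ring, use Lemma~\ref{lem:spPID} to see that ring is again a special PID, and finish with the Stein--Plotkin stability input, which in the paper is packaged as Lemma~\ref{lem:stability}.

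The one substantive difference is in the first reduction. You invoke $\Aff^1$-invariance of $K_1^G$ directly over the regular ring $D[x_1^{\pm 1},\ldots,x_n^{\pm 1}]$ and correctly flag the mixed-characteristic issue, but you do not actually resolve it: the available mixed-characteristic $\Aff^1$-invariance (Theorem~\ref{thm:A1}) needs the base Dedekind ring to have perfect residue fields, which an arbitrary special PID need not have, so your first paragraph as written has a gap in that case. The paper sidesteps this entirely. Since $D$ itself is Dedekind, Theorem~\ref{thm:A1} already gives
\[
K_1^G\bigl(D[x_1,\ldots,x_n,y_1,\ldots,y_m]\bigr)=K_1^G\bigl(D[x_1,\ldots,x_n]\bigr)
\]
with no perfectness hypothesis; then a localization lemma (\cite[Lemma~4.6]{St-poly} or \cite[Lemma~4.2]{Abe}) transfers this equality from $D[x_1,\ldots,x_n]$ to its localization $D[x_1,\ldots,x_n]_{x_1\cdots x_n}=D[x_1^{\pm 1},\ldots,x_n^{\pm 1}]$, yielding exactly the identity you want. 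That localization step is the missing ingredient making your first paragraph work for all special PIDs; once it is inserted, the two proofs coincide.
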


If $n=0$, this result is a special case of our earlier results~\cite[Theorems 1.1 and 1.5]{St-ded}
(see also Theorem~\ref{thm:A1} below). If $n\ge 1$, and if $D$ is semilocal and contains a field, or
if $D$ is itself a field,
the claim of Theorem~\ref{thm:sPID} is already known by~\cite[Corollary 3.3]{St-k1}. In fact, it was even proved there
for arbitrary simply connected semisimple reductive group schemes $G$ of isotropic rank $\ge 2$ and
arbitrary equicharacteristic semilocal regular rings $D$. However, if $n\ge 1$, there is no suitable local-global principle
that would allow to deduce any result for non-semilocal rings from the semilocal case.
Instead, in order to prove Theorem~\ref{thm:sPID} we prove the following result.

\begin{thm}\label{thm:ALau-inj}
Let $A$ be a Dedekind ring, or a Noetherian ring which is geometrically regular over a Dedekind ring $D$ with perfect residue fields.
Let $G$ be a simply connected Chevalley--Demazure group scheme of isotropic rank $\ge 2$.
Then the natural map
$$
K_1^G(A[x_1^{\pm 1},\ldots,x_n^{\pm 1}])\to K_1^G\bigl(A((x_1))\ldots ((x_n))\bigr)
$$ is injective for any $n\ge 1$. If $n=1$, this map is an isomorphism.
\end{thm}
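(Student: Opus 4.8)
The plan is to reduce the statement to one variable and then to prove the one-variable case by patching along the formal disc $x=0$. For the injectivity with arbitrary $n$, observe that the map of the theorem is the composite of the inclusions
$$
A((x_1))\cdots((x_j))[x_{j+1}^{\pm 1},\ldots,x_n^{\pm 1}]\ \hookrightarrow\ A((x_1))\cdots((x_{j+1}))[x_{j+2}^{\pm 1},\ldots,x_n^{\pm 1}],\qquad j=0,\ldots,n-1,
$$
each of which, after forgetting the outer variables $x_{j+2}^{\pm 1},\ldots,x_n^{\pm 1}$, has the shape $R[x^{\pm 1}]\hookrightarrow R((x))$ with $R=A((x_1))\cdots((x_j))[x_{j+2}^{\pm 1},\ldots,x_n^{\pm 1}]$ a regular Noetherian ring. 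Since a composite of injections is an injection, it suffices to prove
$$
(\star)\qquad K_1^G(R[x^{\pm 1}])\to K_1^G(R((x)))\ \text{ is injective for every regular Noetherian ring }R,
$$
together with surjectivity of this map in the case $R=A$ of the theorem.

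The engine for $(\star)$ is the square
$$
\begin{array}{ccc}
R[x] & \longrightarrow & R[[x]]\\
\downarrow & & \downarrow\\
R[x^{\pm 1}] & \longrightarrow & R((x))
\end{array}
$$
which is Cartesian already on the level of rings --- a power series becoming a Laurent polynomial after inverting $x$ is a polynomial --- and hence Cartesian on $\mathbf A$-points of any affine scheme, so that $G(R[x])=G(R[x^{\pm 1}])\times_{G(R((x)))}G(R[[x]])$. Two further facts enter: (i) $R((x))=R[[x]]+R[x^{-1}]$ as abelian groups, so by additivity of the root unipotents $E(R((x)))$ is generated by $E(R[[x]])$ and $E(R[x^{\pm 1}])$; (ii) homotopy invariance of $K_1^G$ over regular rings, which gives $K_1^G(R)\xrightarrow{\ \sim\ }K_1^G(R[x])$ and therefore forces an element $h\in G(R[x])$ with $h|_{x=0}\in E(R)$ to lie in $E(R[x])$. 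Granting the Mayer--Vietoris step for this square --- that any $g\in G(R[x^{\pm 1}])$ whose image in $G(R((x)))$ lies in $E(R((x)))$ can, after left multiplication by a suitable element of $E(R[x^{\pm 1}])$, be moved into the image of $G(R[[x]])$ --- the argument closes: the modified $g$ glues to some $h\in G(R[x])$ whose image in $G(R[[x]])$ lies in $E(R[[x]])$, so $h|_{x=0}\in E(R)$, so $h\in E(R[x])$ by (ii), so $g\in E(R[x^{\pm 1}])$; since $K_1^G$ is a group when the isotropic rank is $\ge2$, this is the asserted injectivity.

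The surjectivity for $n=1$ is handled by cases. If $A$ is a Dedekind ring, then $A[[x]]$ is regular Noetherian and every one of its maximal ideals contains $x$ (one not containing $x$ would contain the unit $1-xy$), so $A((x))=A[[x]][x^{-1}]$ is a regular Noetherian domain of Krull dimension $\le 1$, hence Dedekind; then $K_1^G(A((x)))=1$ by Theorem~\ref{thm:A1} and surjectivity is vacuous. (Likewise all the rings $A((x_1))\cdots((x_n))$ above are Dedekind --- even special PIDs when $A$ is, by Lemma~\ref{lem:spPID} --- so here $(\star)$ already yields $K_1^G(A[x_1^{\pm1},\ldots,x_n^{\pm1}])=1$.) If instead $A$ is a Noetherian ring geometrically regular over a Dedekind ring $D$ with perfect residue fields --- the hypothesis under which Popescu's general N\'eron desingularization applies --- one runs the square of the previous paragraph in the reverse direction: the Mayer--Vietoris obstruction to lifting $[g]\in K_1^G(A((x)))$ into $K_1^G(A[x^{\pm 1}])\times K_1^G(A[[x]])$ lies in the set of $G$-torsors over $\Aff^1_A$, which vanishes because the corresponding Serre--Bass--Quillen statement holds for $A$ (it does for $A$ smooth over $D$, and passes to the colimit presentation of $A$ given by Popescu), while $K_1^G(A[[x]])=K_1^G(A)$ by homotopy invariance (applied to the variable $t$ in the substitution $x\mapsto tx$ over the regular ring $A[[x]]$), the image of $K_1^G(A)$ already lying in $K_1^G(A[x^{\pm 1}])$; and $K_1^G(A)=1$ by Theorem~\ref{thm:A1}.

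The principal obstacle is the non-stable Mayer--Vietoris step. The naive attempt to sort a word representing the $R((x))$-image of $g$ into its $R[[x]]$- and $R[x^{\pm 1}]$-syllables breaks down, because the Chevalley commutator formula mixes the two overrings. For the stable functor $K_1=SK_1$ this difficulty is classical and is the backbone of Suslin's and Kopeiko's theorems for $\SL_N$ and $\Sp_{2N}$ over Laurent polynomial rings; for a general simply connected $G$ of isotropic rank $\ge 2$ one must instead combine the normality of $E$ in $G$, a Quillen--Suslin-type local--global principle reducing $(\star)$ to $R$ local --- over which $K_1^G(R)=1$ and $\Frac(R)((x))$ is a field, so $g$ is already elementary over a large overring --- and a Horrocks-type theorem relating $G$ over $R[x]$ to $G$ over $R[x^{\pm 1}]$. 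These tools are by now available in the required generality, but assembling them into precisely the left-multiplication property stated above, while keeping careful track of which elementary subgroup is permitted (recall that $E_2$ is not normal in $\SL_2$, which is exactly why $\SL_2$-factors are excluded), is where the real work lies; the reductions of the first and third paragraphs, and the gluing-and-homotopy-invariance bookkeeping of the second, are by comparison formal.
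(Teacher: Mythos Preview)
Your outline has the right shape --- reduce to one variable and compare $R[x^{\pm 1}]$ with $R((x))$ --- but it contains several genuine gaps and errors, and it does not match the paper's actual argument.

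First, the factorization is misstated. With $R=A((x_1))\cdots((x_j))[x_{j+2}^{\pm 1},\ldots,x_n^{\pm 1}]$, the ring $R((x_{j+1}))$ equals $A((x_1))\cdots((x_j))[x_{j+2}^{\pm 1},\ldots,x_n^{\pm 1}]((x_{j+1}))$, which \emph{strictly contains} $A((x_1))\cdots((x_{j+1}))[x_{j+2}^{\pm 1},\ldots,x_n^{\pm 1}]$; the displayed inclusion is therefore not of the form $R[x^{\pm 1}]\hookrightarrow R((x))$. The paper repairs this by proving that $K_1^G(R[x^{\pm 1}])\to K_1^G(R((x)))$ is actually an \emph{isomorphism} whenever $K_1^G(R)=K_1^G(R[x])$ (Lemma~\ref{lem:iso}), and then observing that the map in question factors through the smaller intermediate ring, forcing injectivity there. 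Relatedly, your $(\star)$ is stated for arbitrary regular Noetherian $R$, but the homotopy invariance $K_1^G(R)\cong K_1^G(R[x])$ you invoke is not known in that generality; the paper stays inside the class covered by Theorem~\ref{thm:A1} and checks separately (Lemma~\ref{lem:A[[x]]}) that $A((x_1))$ again belongs to it, so that induction on $n$ applies.

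Second, and most importantly, you explicitly leave the ``Mayer--Vietoris step'' unproved and say this is ``where the real work lies''. But that step \emph{is} the content of the one-variable injectivity. The paper does not attempt a direct patching argument along your Cartesian square; instead it quotes~\cite[Corollary 3.4]{St-serr} as a black box for injectivity and~\cite[Corollary 4.4]{St-k1} (packaged as Theorem~\ref{thm:surj}) for surjectivity. Your heuristic about combining normality of $E$, a local--global principle, and a Horrocks-type theorem points in a reasonable direction, but it is not a proof, and your fact (i) that $E(R((x)))$ is generated by $E(R[[x]])$ and $E(R[x^{\pm 1}])$ does not by itself produce the required left $E(R[x^{\pm 1}])$-coset representative in $G(R[[x]])$.

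Third, your surjectivity arguments are incorrect. Theorem~\ref{thm:A1} asserts $K_1^G(A)=K_1^G(A[x_1,\ldots,x_n])$, not $K_1^G(A)=1$; so neither ``$K_1^G(A((x)))=1$'' in the Dedekind case nor ``$K_1^G(A)=1$'' in the geometrically-regular case follows from it, and the torsor discussion is beside the point. In the paper, surjectivity holds for \emph{every} commutative ring $A$ via the decomposition $G(A((x)))=G(A[x^{\pm 1}])E(A[[x]])$ of Theorem~\ref{thm:surj}, with no hypotheses on $A$ and no case distinction.
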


\section{Proof of Theorems~\ref{thm:sPID} and~\ref{thm:ALau-inj}}\label{sec:main}

Recall that a pair $(A,I)$, where $A$ is a commutative ring and $I$ is an ideal of $A$, is called a {\it Henselian
pair} if $I$ is contained in the Jacobson radical of $A$ and for any monic polynomial $f\in A[x]$ and any factorization
$\bar f=g_0h_0$, where $\bar f$ is the image of $f$ in $A/I[x]$ and $g_0,h_0$ are two monic polynomials in $A/I[x]$
generating the unit ideal, there exists a factorization $f=gh$ in $A[x]$ with $g,h$ monic and $\bar g=g_0$, $\bar h=h_0$.

\begin{prop}\label{prop:hens}
Let $G$ be a simply connected Chevalley--Demazure group scheme.
Let $A$ be a commutative ring and let $I$ be an ideal of $A$.
\begin{enumerate}
\item If $I$ is contained in the Jacobson radical of $A$, then the natural map $K_1^{G}(A)\to K_1^{G}(A/I)$ is injective.
\item If $(A,I)$ is a Henselian pair, then $K_1^{G}(A)\cong K_1^{G}(A/I)$.
\end{enumerate}
\end{prop}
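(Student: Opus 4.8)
The plan is to reduce both statements to the single assertion that
$$
G(A,I)\subseteq E(A)\quad\text{whenever}\quad I\subseteq\rad(A),
$$
where $G(A,I)=\ker\bigl(G(A)\to G(A/I)\bigr)$ is the principal congruence subgroup and $\rad(A)$ is the Jacobson radical of $A$. Granting this, part~(1) follows: if $g_1,g_2\in G(A)$ satisfy $\bar g_1E(A/I)=\bar g_2E(A/I)$ in $G(A/I)/E(A/I)$, then $\bar g_2^{-1}\bar g_1\in E(A/I)$; writing it as a word in the generators $x_\alpha(\bar r_\alpha)$ and lifting the parameters to $A$ produces $e\in E(A)$ with $\overline{g_2^{-1}g_1e^{-1}}=1$, so $g_2^{-1}g_1e^{-1}\in G(A,I)\subseteq E(A)$ and hence $g_1E(A)=g_2E(A)$. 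For part~(2), injectivity is part~(1), while surjectivity of $K_1^G(A)\to K_1^G(A/I)$ is implied by surjectivity of $G(A)\to G(A/I)$; the latter holds because $G$ is a smooth affine $A$-scheme, $(A,I)$ is a Henselian pair, and for a smooth scheme $X$ over $A$ the map $X(A)\to X(A/I)$ is surjective along a Henselian pair.

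To prove the displayed inclusion, fix a Borel subgroup $B=T\cdot U\subseteq G$ over $\ZZ$ with unipotent radical $U$, maximal torus $T$, and opposite unipotent subgroup $U^-$; the big cell $\Omega=U^-\cdot T\cdot U$ is open in $G$ and the product morphism $U^-\times T\times U\to\Omega$ is an isomorphism of $\ZZ$-schemes \cite{SGA3}. First observe that $\Omega(A)\subseteq E(A)$ for every commutative ring $A$: the groups $U(A)$ and $U^-(A)$ consist of products of elements $x_\alpha(r)$, which lie in $E(A)$ by definition; and since $G$ is simply connected, the simple coroots form a $\ZZ$-basis of the cocharacter lattice of $T$, so every element of $T(A)$ is a product of elements $h_{\alpha_i}(u_i)$ with $u_i\in A^*$, where $h_\alpha(u)=w_\alpha(u)w_\alpha(1)^{-1}$ and $w_\alpha(u)=x_\alpha(u)x_{-\alpha}(-u^{-1})x_\alpha(u)$ lie in $E(A)$; hence $\Omega(A)=U^-(A)\,T(A)\,U(A)\subseteq E(A)$.

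Now take $g\in G(A,I)$ with $I\subseteq\rad(A)$; by the previous paragraph it suffices to show that $g$ factors through $\Omega$. The scheme-theoretic preimage $g^{-1}(G\setminus\Omega)$ is a closed subscheme $V(J)$ of $\Spec A$. For every maximal ideal $\mathfrak m$ of $A$ we have $I\subseteq\mathfrak m$, so $g\bmod\mathfrak m$ is the identity of $G(\kappa(\mathfrak m))$, which lies in $\Omega(\kappa(\mathfrak m))$ because $\Omega$ is open and contains the identity section; hence $\mathfrak m\notin V(J)$, i.e. $J\not\subseteq\mathfrak m$. As $J$ is contained in no maximal ideal, $J=A$ and $V(J)=\emptyset$, so $g\in\Omega(A)\subseteq E(A)$. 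This proves the inclusion, hence part~(1), and then part~(2) by the Hensel lifting quoted above. Note that no hypothesis on the isotropic rank of $G$ is made; the argument uses neither normality of $E$ nor the isotropic rank, and it is compatible with the decomposition of $G$, resp.\ $E$, into direct factors — in particular with the fixed choices of $E_2\subseteq\SL_2$ in any $\SL_2$-factors — so such factors need no separate treatment.

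I expect the only genuinely non-formal ingredient to be the surjectivity of $G(A)\to G(A/I)$ for a Henselian pair, which is the standard lifting property of smooth morphisms along Henselian pairs applied to the smooth affine $\ZZ$-scheme $G$; isolating and citing this precisely is the main point. The one place where simple connectedness is essential is the identification $T(A)\subseteq E(A)$: for a general Chevalley--Demazure group one has $T(A,I)\not\subseteq E(A)$ in general, so this hypothesis cannot be dropped.
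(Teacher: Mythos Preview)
Your proof is correct and follows essentially the same approach as the paper: both reduce (1) to showing that the congruence kernel lands in the big cell $\Omega$ (whence in $E(A)$ via $T(A)\subseteq E(A)$ by simple connectedness), and both deduce (2) from (1) together with the smooth lifting property along Henselian pairs. The only cosmetic difference is that the paper invokes the fact that $\Omega$ is a \emph{principal} open subscheme, so membership in $\Omega(A)$ is detected by a single function $d\in\ZZ[G]$ being a unit, whereas you argue with the closed complement $V(J)$ and show $J$ avoids every maximal ideal; your version is marginally more general (it does not require principality) but the two arguments are equivalent here.
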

\begin{proof}
(1) Since $G(A)\to G(A/I)$ is a group homomorphism and $E(A)\to E(A/I)$ is surjective, it is enough to show that
any element $g\in G(A)$ belongs to $E(A)$, once it is mapped to $1$ under $G(A)\to G(A/I)$.
Let $B,B^-$ be a pair of standard opposite Borel subgroups of $G$, let $U_B$, $U_{B^-}$ be their unipotent radicals,
and let $T=B\cap B^-$ be their common maximal torus.
The group scheme $G$ contains an open $\ZZ$-subscheme $\Omega_B=U_B\cdot T\cdot U_{B^-}$, isomorphic to the direct product of schemes
$U_B\times_{\ZZ}T\times_{\ZZ} U_{B^-}$, and this subscheme $\Omega_B$ is a principal open subscheme~\cite{Che55}.
That is, there an element $d\in\ZZ[G]$ such that $g\in G(A)=\Hom_{\ZZ}(\ZZ[G],A)$ belongs to $U_B(A)T(A)U_{B^-}(A)$
if and only if $g(d)\in A^\times$. Since $G$ is simply connected, we have $T(A)\le E(A)$. Then, if
$g\in G(A)$ is mapped to $1\in G(A/I)$, it follows that $g(d)\in A$ is mapped to $(A/I)^\times$. Since $I$ is contained in the
Jacobson radical of $I$, it follows that $g(d)\in A^\times$. Then $g\in\Omega_B(A)$.
Since $G$ is simply connected, we have $T(A)\le E(A)$. Hence $g\in E(A)$.

(2) By (1) the map $K_1^{G}(A)\to K_1^{G}(A/I)$ is injective.
Since $G$ is affine and smooth, the map $G(A)\to G(A/I)$ is surjective~\cite[Th. I.8]{Gruson}. It follows
that $K_1^{G}(A)\to K_1^{G}(A/I)$ is surjective.
\end{proof}

\begin{rem}
An analog of Proposition~\ref{prop:hens} for isotropic reductive groups $G$ was established in~\cite[\S 7]{GSt} under the
additional assumption that $G$ is defined over a semilocal ring $C$ such that $A$ is a $C$-algebra.
The proof for non-split groups is much more complicated.
\end{rem}

The following corollary generalizes~\cite[Lemma 1 and Remark on p. 1112]{Kop-lau1} for $\SL_n$, $n\ge 2$.

\begin{cor}
Let $G$ be a simply connected Chevalley--Demazure group scheme.
Then $K_1^G(A)=K_1^G(A[[x]])$ for any commutative ring $A$.
\end{cor}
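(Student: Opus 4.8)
The plan is to apply part (2) of Proposition~\ref{prop:hens} to the pair $(A[[x]], xA[[x]])$. The quotient $A[[x]]/xA[[x]]$ is canonically isomorphic to $A$, so the desired equality $K_1^G(A)=K_1^G(A[[x]])$ would follow immediately once we know that $(A[[x]], xA[[x]])$ is a Henselian pair. So the entire task reduces to this classical fact about power series rings.

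First I would recall that $xA[[x]]$ is contained in the Jacobson radical of $A[[x]]$: indeed, for any $f\in xA[[x]]$ the element $1+f$ has constant term $1$, hence is a unit in $A[[x]]$ (its inverse is computed by the usual geometric-series recursion on coefficients), and an ideal all of whose elements $f$ satisfy $1+f\in A[[x]]^\times$ lies in the Jacobson radical. Second, I would verify the lifting property for factorizations of monic polynomials. Given a monic $F\in A[[x]][t]$ and a factorization $\bar F=g_0h_0$ over $A[[x]]/xA[[x]]=A$ into monic polynomials generating the unit ideal, one lifts $g_0,h_0$ arbitrarily to monic polynomials $g^{(0)},h^{(0)}\in A[[x]][t]$ of the correct degrees and then corrects them order by order in the $x$-adic filtration: writing $g=\sum g_k x^k$, $h=\sum h_k x^k$ with $g_0=g^{(0)}$, $h_0=h^{(0)}$ as starting data (coefficients $g_k,h_k\in A[t]$, with $g_k,h_k$ of degree $<\deg g_0$, $<\deg h_0$ respectively for $k\ge 1$), the equation $F=gh$ becomes, at each order $x^k$, a linear equation $g_0 h_k + h_0 g_k \equiv (\text{known lower-order terms}) \pmod{\deg}$, which is solvable precisely because $g_0$ and $h_0$ are comaximal in $A[t]$ — one uses a B\'ezout-type identity $ug_0+vh_0=1$ and then reduces degrees using that $g_0,h_0$ are monic. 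This gives a convergent (in the $x$-adic topology, which is complete) factorization $F=gh$ in $A[[x]][t]$ with $g,h$ monic reducing to $g_0,h_0$.

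Alternatively, and more cleanly, I would simply invoke the standard fact that for any ring $A$ and any ideal $\mathfrak a$ of $A[[x]]$ containing $x$, the pair $(A[[x]],\mathfrak a)$ is Henselian — this is because $A[[x]]$ is $x$-adically complete and $(A[[x]], xA[[x]])$ is Henselian by completeness (a complete ring with respect to an ideal $I$ gives a Henselian pair $(A,I)$), and then one enlarges the ideal. Since $xA[[x]]$ is the relevant ideal and $A[[x]]$ is genuinely $x$-adically complete and separated, $(A[[x]], xA[[x]])$ is Henselian, and we are done.

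The main obstacle here is essentially nil: this corollary is a direct and formal consequence of Proposition~\ref{prop:hens}(2) together with the well-known Henselianity of $(A[[x]], xA[[x]])$. The only point requiring a word of justification is the Henselian-pair property itself, and even that is standard; the combinatorial bookkeeping in the order-by-order lifting of the factorization is routine and I would not spell it out, preferring to cite the completeness argument instead.
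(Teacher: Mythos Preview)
Your proposal is correct and follows exactly the paper's own proof: apply Proposition~\ref{prop:hens}(2) to the Henselian pair $(A[[x]],xA[[x]])$, using $A[[x]]/xA[[x]]\cong A$. The paper simply cites that $(A[[x]],xA[[x]])$ is Henselian without further justification, so your added explanation via $x$-adic completeness is more than sufficient.
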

\begin{proof}
Follows from Proposition~\ref{prop:hens} since $(A[[x]],xA[[x]])$ is a Henselian pair.
\end{proof}

\begin{thm}\label{thm:surj}
Let $G$ be a simply connected Chevalley--Demazure group scheme.
 Let $A$ be a commutative ring. Then $G(A((x)))=G(A[x^{\pm 1}])E(A[[x]])$.
In particular, $K_1^G(A[x^{\pm 1}])\to K_1^G(A((x)))$ is surjective.
\end{thm}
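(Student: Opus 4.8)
The plan is to produce, for a given $g\in G(A((x)))$, a factorization $g = h\cdot e$ with $h\in G(A[x^{\pm1}])$ and $e\in E(A[[x]])$, which immediately gives surjectivity of $K_1^G(A[x^{\pm1}])\to K_1^G(A((x)))$. First I would exploit the big-cell structure of $G$ already used in Proposition~\ref{prop:hens}: the open subscheme $\Omega_B = U_B\cdot T\cdot U_{B^-}\cong U_B\times T\times U_{B^-}$ is principal, defined by the nonvanishing of a single $d\in\ZZ[G]$, and since $G$ is simply connected $T(R)\le E(R)$ for every ring $R$. The element $d(g)$ lies in $A((x))^\times$; after multiplying $g$ on one side by a suitable monomial $x^k$ (which we are free to do since $x^k\in T(A[x^{\pm1}])$ when realized through a cocharacter, or more directly since translating does not leave the problem) one arranges $d$ of the translated element to be a unit in $A[[x]]$, so that the translated element lands in $\Omega_B(A((x)))$ and in fact, after a further correction by an element of $U_B(A[x^{\pm1}])T(A[x^{\pm1}])$, in $\Omega_B(A[[x]])$ modulo the left factor. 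The precise bookkeeping here is the first technical point, but it is essentially the standard ``reduction to the big cell by clearing denominators'' trick.

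Having reduced to $\Omega_B$, the problem becomes coordinate-wise: an element of $\Omega_B(A((x)))$ is a triple $(u^+, t, u^-)$ with $u^\pm\in U_{B^\pm}(A((x)))$ and $t\in T(A((x)))$. Using the root subgroup parametrization, $U_B(A((x)))$ is a product of copies of $(A((x)),+)$ indexed by the positive roots (in a fixed order compatible with a central series of $U_B$), and similarly for $U_{B^-}$; the torus part $t$ corresponds to a tuple of units in $A((x))^\times$. For the unipotent parts I would write each Laurent series coordinate as (Laurent polynomial part) $+$ ($x A[[x]]$-part), i.e. split $A((x)) = A[x^{\pm1}] + xA[[x]]$ as abelian groups (more precisely $A((x)) = A[x^{-1}] \oplus x A[[x]]$, and $A[x^{-1}]\subset A[x^{\pm1}]$), so each $x_\alpha(a)$ with $a\in A((x))$ factors as $x_\alpha(a_{\le 0})\,x_\alpha(a_{>0})$ with $a_{\le 0}\in A[x^{\pm1}]$ and $a_{>0}\in xA[[x]]$, the latter lying in $E(A[[x]])$. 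For the torus, a unit $v\in A((x))^\times$ need not split as a product of a unit in $A[x^{\pm1}]$ and a unit in $A[[x]]$ for a general ring $A$, but here $v$ arises as $d(g)$-type data; in fact the cleanest route is to observe that $T$ being a split torus, $t$ is given by a cocharacter evaluation and the relevant units, when $g$ came from the big cell after clearing denominators, can be taken of the form $x^k\cdot(\text{unit in }A[[x]]^\times)$, and $x^k = \alpha^\vee(x^k)\cdots$ lies in $E(A[x^{\pm1}])$ while the $A[[x]]^\times$ part lies in $T(A[[x]])\le E(A[[x]])$.

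The remaining and main obstacle is that after splitting each factor individually one obtains $g = (\text{stuff in }G(A[x^{\pm1}]))\cdot(\text{stuff in some }E(A[[x]])\text{ factors})$ but the two kinds of factors are interleaved in the big-cell product $U_B\cdot T\cdot U_{B^-}$, so one must commute the $A[x^{\pm1}]$-factors past the $E(A[[x]])$-factors (and vice versa) to collect them on the correct sides. This is controlled by the Chevalley commutator formulas: commuting $x_\alpha(p)$ with $p\in A[x^{\pm1}]$ past $x_\beta(q)$ with $q\in xA[[x]]$ produces elementary generators $x_{i\alpha+j\beta}(cp^iq^j)$ with argument in $xA[[x]]$ (since $q\in xA[[x]]$ and $xA[[x]]$ is an ideal of $A[[x]]$ closed under multiplication by $A[x^{\pm1}]$... wait, $p\in A[x^{\pm1}]$ times $q\in xA[[x]]$ need not lie in $A[[x]]$ because of negative powers of $x$) --- so in fact one must be careful: I would instead only move the strictly positive (i.e. $xA[[x]]$) parts to the far right one root-subgroup at a time in an order respecting the filtration of $U_{B}$, so that every commutator correction stays in $E(A[[x]])$ (multiplying $xA[[x]]$ by $A[[x]]$, never by $x^{-1}$), and dually collect the $A[x^{\pm1}]$-parts on the left, absorbing the torus as explained. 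Making this ordering argument precise --- choosing the enumeration of $\Phi^+$ and $\Phi^-$ and the direction of collection so that no negative powers of $x$ ever meet a positive-power series --- is the heart of the proof; once it is set up, surjectivity of $K_1^G(A[x^{\pm1}])\to K_1^G(A((x)))$ follows formally from $G(A((x))) = G(A[x^{\pm1}])E(A[[x]])$ together with the inclusion $E(A[[x]])\subseteq G(A[x^{\pm1}])\cdot(\text{anything})$ modulo $E$, i.e. the image of $E(A[[x]])$ in $K_1^G(A((x)))$ is trivial because $E(A[[x]])\subseteq E(A((x)))$.
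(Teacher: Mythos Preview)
Your plan has a genuine gap at the very first step. You assert that ``the element $d(g)$ lies in $A((x))^\times$'', but this is false for a general $g\in G(A((x)))$: the big cell $\Omega_B$ is a proper open subscheme of $G$, and for an arbitrary commutative ring $A$ there is no reason for $d(g)$ to be a unit in $A((x))$. Multiplying by a cocharacter value $\alpha^\vee(x^k)$ does not rescue this, since it only shifts $d(g)$ by a power of $x$ and does nothing about, say, a non-unit leading coefficient (or about zero-divisor phenomena when $A$ is not a domain). You also correctly flag, but do not resolve, two further obstructions: a unit in $A((x))^\times$ need not factor as $x^k$ times a unit of $A[[x]]$ for general $A$ (think of $A$ with nontrivial idempotents), so the torus part does not split the way you want; and the commutator bookkeeping you describe really does produce terms like $p^iq^j$ with $p\in A[x^{\pm1}]$ and $q\in xA[[x]]$, which can fall outside $A[[x]]$. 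None of these are mere technicalities --- each one blocks the argument over a general base ring.

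The paper bypasses all of this by quoting a Birkhoff-type factorization proved elsewhere: by \cite[Corollary~4.4]{St-k1} one already has $G(A((x)))=G(A[x^{\pm1}])G(A[[x]])$ for any commutative ring $A$. The remaining step is then a one-liner: since $(A[[x]],xA[[x]])$ is a Henselian pair, Proposition~\ref{prop:hens} gives $K_1^G(A[[x]])\cong K_1^G(A)$, and combined with the section $A\hookrightarrow A[[x]]$ this means $G(A[[x]])=G(A)E(A[[x]])\subseteq G(A[x^{\pm1}])E(A[[x]])$. So the hard content is entirely in the cited factorization, not in any big-cell manipulation; if you want a self-contained argument, that factorization is what you would need to reprove, and it is typically done via patching of $G$-torsors or a careful $\GL_N$-embedding, not by a direct big-cell splitting.
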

\begin{proof}
By~\cite[Corollary 4.4]{St-k1} we have $G(A((x)))=G(A[x^{\pm 1}])G(A[[x]])$. By Proposition~\ref{prop:hens} we have
$G(A[[x]])=G(A)E(A[[x]])$. The claim follows.
\end{proof}

\begin{lem}\label{lem:iso}
Let $G$ be a simply connected Chevalley--Demazure group scheme of isotropic rank $\ge 2$.
Let $A$ be a commutative ring such that $K_1^G(A)=K_1^G(A[x])$. Then
$K_1^G(A[x^{\pm 1}])=K_1^G\bigl(A((x))\bigr)$.
\end{lem}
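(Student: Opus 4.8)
The plan is to separate the two halves of the asserted equality. Surjectivity of the natural map $\phi\colon K_1^G(A[x^{\pm1}])\to K_1^G(A((x)))$ is already Theorem~\ref{thm:surj}, so everything reduces to injectivity of $\phi$: concretely, given $g\in G(A[x^{\pm1}])$ whose image $\bar g$ in $G(A((x)))$ lies in $E(A((x)))$, I must show $g\in E(A[x^{\pm1}])$. The first ingredient I would use is the elementary identity $A[x]=A[x^{\pm1}]\cap A[[x]]$ inside $A((x))$: since $G$ is affine of finite type over $\ZZ$, so that $\ZZ[G]$ is finitely generated, this gives the fibre-product description $G(A[x])=G(A[x^{\pm1}])\times_{G(A((x)))}G(A[[x]])$.

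Next I would invoke the decomposition $G(A((x)))=G(A[x^{\pm1}])\,G(A[[x]])$ of \cite[Cor.~4.4]{St-k1} (the one already behind Theorem~\ref{thm:surj}), in the sharper, Birkhoff-type form in which the $G(A[x^{\pm1}])$-factor may be taken of the shape $h_-\,t$ with $t\in T(A[x^{\pm1}])$ and $h_-\in G_1(A[x^{-1}]):=\Ker\bigl(G(A[x^{-1}])\xrightarrow{x^{-1}\mapsto 0}G(A)\bigr)$; if \cite{St-k1} does not record this sharpening directly it follows by the same reasoning, and a self-contained substitute is indicated below. This is the one place where the hypothesis enters. Indeed $G$ is simply connected, so $t\in E(A[x^{\pm1}])$; and under the identification $A[x^{-1}]\cong A[x]$ the assumption $K_1^G(A)=K_1^G(A[x])$ says $G(A[x^{-1}])=G(A)\,E(A[x^{-1}])$, so writing $h_-=h_0\eps$ with $h_0\in G(A)$, $\eps\in E(A[x^{-1}])$ and specialising $x^{-1}\mapsto 0$ forces $h_0=\eps(0)^{-1}\in E(A)$, hence $h_-\in E(A[x^{-1}])\subseteq E(A[x^{\pm1}])$. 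Applying the decomposition to $\bar g$ therefore produces $\theta\in E(A[x^{\pm1}])$ with $\theta^{-1}\bar g\in G(A[[x]])$; by the fibre-product description $g'':=\theta^{-1}g$ lies in $G(A[x])$, while $\overline{g''}=\theta^{-1}\bar g$ lies in $G(A[[x]])\cap E(A((x)))$. Since $\theta\in E(A[x^{\pm1}])$ we have $[g]=[g'']$ in $K_1^G(A[x^{\pm1}])$, so it remains to prove $g''\in E(A[x])$.

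Finally I would descend to $A$. The hypothesis gives $g''\equiv g''(0)\pmod{E(A[x])}$, where $g''(0)\in G(A)$ is the value at $x=0$, and $g''(0)$ is also the reduction modulo $x$ of $\overline{g''}\in G(A[[x]])$. By Proposition~\ref{prop:hens} and the ensuing isomorphism $K_1^G(A)\cong K_1^G(A[[x]])$ we may write $\overline{g''}=c_0\,\eps_+$ with $c_0\in G(A)$ and $\eps_+\in E(A[[x]])$; then $g''(0)$ is $E(A)$-equivalent to $c_0$, and the class of $c_0$ in $K_1^G(A)$ maps, along $K_1^G(A)\cong K_1^G(A[[x]])\to K_1^G(A((x)))$, to the class of $\overline{g''}$, which is trivial because $\overline{g''}\in E(A((x)))$. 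Thus the whole argument comes down to the injectivity of the natural map $K_1^G(A)\to K_1^G(A((x)))$, equivalently to $G(A)\cap E(A((x)))=E(A)$ under the standing hypothesis. This last step is the one I expect to be the real obstacle: there is no ring retraction $A((x))\to A$, so it cannot be obtained formally, and it has to be established by a direct relative (``excision''-type) argument showing that constant classes survive the passage to Laurent series. Granting it, $g''(0)\in E(A)$, hence $g''\in E(A[x])\subseteq E(A[x^{\pm1}])$ and $g=\theta g''\in E(A[x^{\pm1}])$, as required. As the promised substitute for the Birkhoff sharpening, one may instead expand $\bar g$ as a word in the generators $x_\alpha(r)$, $r\in A((x))$, split each $r$ into its principal part in $x^{-1}A[x^{-1}]$ and its regular part in $A[[x]]$, and use the Chevalley commutator formula together with the normality of $E$ to gather the factors of the two types; this again leaves one facing exactly the injectivity of $K_1^G(A)\to K_1^G(A((x)))$.
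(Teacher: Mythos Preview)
Your proposal has a genuine gap that you yourself flag: the injectivity of $K_1^G(A)\to K_1^G(A((x)))$, equivalently $G(A)\cap E(A((x)))=E(A)$. You reduce the full injectivity of $K_1^G(A[x^{\pm1}])\to K_1^G(A((x)))$ to this special case but then only ``grant'' it without proof. The special case is not obviously easier: as you note, there is no retraction $A((x))\to A$, and neither the Henselian argument of Proposition~\ref{prop:hens} nor any formal manoeuvre supplies one. Your proposed substitute---splitting each Laurent series into its principal and regular parts and rearranging via the Chevalley commutator relations---does not work as stated: the commutators $[x_\alpha(r),x_\beta(s)]$ produce terms involving products $r^is^j$ that again mix principal and regular parts, so no finite rearrangement separates them. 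The Birkhoff sharpening you invoke (taking the $G(A[x^{\pm1}])$-factor in the form $h_-\,t$ with $h_-\in G_1(A[x^{-1}])$) is likewise not established; \cite[Cor.~4.4]{St-k1} gives only $G(A((x)))=G(A[x^{\pm1}])G(A[[x]])$, and the refinement would need its own argument.

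By contrast, the paper's proof is two lines: surjectivity is Theorem~\ref{thm:surj}, and injectivity is quoted directly from \cite[Corollary~3.4]{St-serr}. That external result supplies precisely the injectivity of $K_1^G(A[x^{\pm1}])\to K_1^G(A((x)))$ under the homotopy-invariance hypothesis $K_1^G(A)=K_1^G(A[x])$; the real work is done in the cited paper. So your reduction, even if the Birkhoff step were filled in, arrives at a statement essentially equivalent to what the paper takes as a black box---you have reorganised the problem rather than solved it.
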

\begin{proof}
By Theorem~\ref{thm:surj} $K_1^{G,B}(A[x^{\pm 1}])\to K_1^{G,B}(A((x)))$ is surjective. By~\cite[Corollary 3.4]{St-serr}
this map is injective.
\end{proof}

Let $\phi:R\to A$ be a homomorphism of commutative rings.
Following~\cite{Swan} we will say that $\phi$ is {\it geometrically regular}, if $\phi$ is flat and
for every prime ideal $p$ of $R$, and every prime ideal $q$ of $A$ lying over $p$, $A_q/pA_q=k(p)\otimes_A A_q$
is a geometrically regular $k(p)$-algebra, i.e. if for any purely inseparable finite field extension
$k'/k(p)$ the ring $k'\otimes_{k(p)}A_q/pA_q=k'\otimes_A A_q$ is regular in the usual sense. We will just
say that $A$ is a {\it geometrically regular $R$-algebra}, if the structure homomorphism $\phi:R\to A$ is clear from context.

The following theorem is a slight extension of the main result of~\cite{St-ded}.

\begin{thm}\label{thm:A1}\cite[Theorems 1.1, 1.5]{St-ded}
Assume that either $A$ is either a Dedekind ring, or $A$ is a Noetherian ring geometrically regular over a Dedekind ring
with perfect residue fields.
Let $G$ be a simply connected Chevalley--Demazure group scheme of isotropic rank $\ge 2$. Then
$K_1^G(A)=K_1^G(A[x_1,\ldots,x_n])$ for any $n\ge 1$.
\end{thm}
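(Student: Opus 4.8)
The plan is to reduce the $n$-variable statement to the one-variable case, which is exactly the content of~\cite[Theorems 1.1 and 1.5]{St-ded}, by induction on $n$; the whole point is that the two classes of rings appearing in the hypothesis are stable under adjoining a polynomial variable. I would split along the two alternatives of the hypothesis. If $A$ is a Dedekind ring, I would appeal directly to~\cite[Theorem 1.1]{St-ded}, which already yields $K_1^G(A)=K_1^G(A[x_1,\ldots,x_n])$ for all $n\ge 1$; and if ``Dedekind ring'' is read so as to allow a finite product of Dedekind domains rather than a domain, I would first observe that $G$, $E$, and hence $K_1^G$ commute with finite direct products of rings, which reduces this alternative at once to the case of a Dedekind domain.

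The substantive case is when $A$ is Noetherian and geometrically regular over a Dedekind ring $D$ with perfect residue fields. Here I would argue by induction on $n$, the base case $n=1$ being~\cite[Theorem 1.5]{St-ded}. For the inductive step, put $B=A[x_1,\ldots,x_{n-1}]$, so that $A[x_1,\ldots,x_n]=B[x_n]$. The key claim is that $B$ again satisfies the hypotheses of the theorem with the \emph{same} Dedekind ring $D$: it is Noetherian by the Hilbert basis theorem, and the structure morphism $D\to B$ factors as $D\to A\to B$, where $D\to A$ is geometrically regular by assumption and $A\to B$ is smooth (being a polynomial extension), hence geometrically regular; since a composite of geometrically regular homomorphisms is again geometrically regular (see~\cite{Swan}), $B$ is geometrically regular over $D$, and $D$ still has perfect residue fields. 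Applying the case $n=1$ to $B$ gives $K_1^G(B)=K_1^G(B[x_n])$, that is, $K_1^G(A[x_1,\ldots,x_{n-1}])=K_1^G(A[x_1,\ldots,x_n])$, and combining this with the induction hypothesis $K_1^G(A)=K_1^G(A[x_1,\ldots,x_{n-1}])$ yields the assertion.

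The only point that requires care is precisely the verification that $B=A[x_1,\ldots,x_{n-1}]$ remains in the admissible class, and, crucially, over the \emph{same} base $D$ with perfect residue fields---this is what makes the induction close up (one cannot, for instance, run such an induction starting from a Dedekind ring with imperfect residue fields, which is why that alternative is handled by citing~\cite[Theorem 1.1]{St-ded} directly rather than by the same inductive device). For this step one needs only two standard facts: that a smooth morphism is geometrically regular, and that geometric regularity is stable under composition; both can be quoted from~\cite{Swan}. I do not expect any genuine difficulty beyond this bookkeeping, since all the real work---the homotopy-invariance statement in a single variable---is already carried out in~\cite{St-ded}; the ``slight extension'' is exactly the passage from one to several variables in the geometrically regular case (together with the trivial reduction from finite products of Dedekind domains to a single Dedekind domain, if that generality is intended).
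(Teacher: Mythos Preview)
Your reduction to the one-variable case is exactly what the paper does: both you and the paper observe that a polynomial extension of a Noetherian ring geometrically regular over $D$ is again such, so it suffices to prove $K_1^G(A)=K_1^G(A[x])$. However, you misidentify where the ``slight extension'' lies. It is \emph{not} the passage from one to several variables; that step is indeed a one-line bookkeeping as you say. The extension is the one-variable case itself.

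The gap is your claim that the base case $n=1$ is \cite[Theorem 1.5]{St-ded}. That theorem, as the paper's proof makes clear, applies only to rings \emph{essentially smooth} over a Dedekind domain (or DVR), not to an arbitrary Noetherian ring geometrically regular over $D$. To bridge this, the paper inserts two nontrivial ingredients that your proposal omits: first, the generalized Quillen--Suslin local-global principle (\cite[Theorem 3.1]{Sus}, \cite[Lemma 17]{PS}, \cite[Theorem 5.4]{Stepanov-elem}) reduces to proving $K_1^G(A_m)=K_1^G(A_m[x])$ for each maximal localization $A_m$; second, Popescu's theorem (\cite{Po90}, \cite[Theorem 1.1]{Swan}) writes $A_m$ as a filtered direct limit of smooth $D_p$-algebras, and since $K_1^G$ commutes with filtered direct limits one may replace $A_m$ by an essentially smooth $D_p$-algebra. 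Only then does \cite[Theorem 1.5]{St-ded} apply. Without these two steps your induction has no base case.
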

\begin{proof}
If $A$ is a Dedekind ring, the claim is contained in~\cite[Theorem 1.1]{St-ded}. Assume that
$A$ is geometrically regular over a Dedekind ring $D$ with perfect residue
fields. Since $A[x_1,\ldots,x_n]$ is also geometrically regular over $D$ for any $n\ge 1$, it is enough to show that
$K_1^G(A)=K_1^G(A[x])$. By the generalized Quillen-Suslin local-global principle
(see~\cite[Theorem 3.1]{Sus},~\cite[Corollary 4.4]{Sus-K-O1},~\cite[Lemma 17]{PS},~\cite[Theorem 5.4]{Stepanov-elem})
in order to show that $K_1^G(A)=K_1^G(A[x])$, it is enough to show that
$K_1^G(A_m)=K_1^G(A_m[x])$ for every maximal ideal $m$ of $A$. By the very definition of a
geometrically regular ring homomorphism given above, every maximal localization
$A_m$ is geometrically regular over the corresponding prime localization $D_p$ of $D$. By Popescu's theorem~\cite{Po90} (see~\cite[Theorem 1.1]{Swan}),
it follows that $A_m$ is a filtered direct limit of smooth $D_p$-algebras.
Since $K_1^G$ commutes with filtered direct limits, we can assume that $A_m$ is actually a localization
of a smooth $D_p$-algebra, and thus essentially smooth over $D_p$. Then $K_1^G(A_m[x])=K_1^G(A_m)$ by~\cite[Theorem 1.5]{St-ded}.
\end{proof}

\begin{lem}\label{lem:A[[x]]}
Let $D$ be a Dedekind ring with perfect residue fields,
and let $A$ be a Noetherian ring which is geometrically regular over $D$.
Then $A[[x]]$ and $A((x))$ are Noetherian rings geometrically regular over $D$.
\end{lem}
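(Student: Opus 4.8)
The plan is to verify the three conditions defining a geometrically regular homomorphism for $D\to A[[x]]$ directly, and then to deduce the statement for $A((x))$ by localization. That $A[[x]]$ is Noetherian is the power-series form of the Hilbert basis theorem, and $A((x))=A[[x]][x^{-1}]$ is then Noetherian as a localization of it. Flatness of $D\to A[[x]]$ follows by composing the flat maps $D\to A$ (hypothesis), $A\to A[x]$ (free), and $A[x]\to A[[x]]$ (the $(x)$-adic completion of the Noetherian ring $A[x]$), and then $D\to A((x))$ is flat as well.

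It remains to show that every fibre $k(\mathfrak p)\otimes_D A[[x]]$, $\mathfrak p\in\Spec D$, is geometrically regular over $k(\mathfrak p)$; as $D$ is Dedekind, $\mathfrak p$ is either a maximal ideal $m$ or $(0)$. If $\mathfrak p=m$, then, $m$ being finitely generated, $k(m)\otimes_D A[[x]]\cong A[[x]]/mA[[x]]\cong (A/mA)[[x]]$, and $A/mA$ is geometrically regular over the perfect field $k(m)$, hence regular. Here I would use the elementary fact that $B[[x]]$ is regular for any regular Noetherian $B$: one reduces to localizations at maximal ideals of $B[[x]]$, each of which contains $x$ and has the form $\mathfrak nB[[x]]+xB[[x]]$ with $\mathfrak n\subset B$ maximal, and then $B_{\mathfrak n}\to B[[x]]_{\mathfrak nB[[x]]+xB[[x]]}$ is a flat local homomorphism with regular source and closed fibre $k(\mathfrak n)[[x]]$, a discrete valuation ring, hence regular target. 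Applied to $B=A/mA$ this gives that $(A/mA)[[x]]$ is regular, and a regular algebra over the perfect field $k(m)$ is geometrically regular over it; so the fibres over maximal ideals are fine. (Incidentally $A$ itself is regular, being flat over the regular ring $D$ with regular fibres, so $A[[x]]$ and $A((x))$ are regular as well.)

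The substantive step is the fibre over $(0)$: I must show $K\otimes_D A[[x]]$ is geometrically regular over $K=\Frac D$. It is regular, being a localization of the regular ring $A[[x]]$, so nothing more is needed in characteristic $0$; in characteristic $p$ one must further show $K'\otimes_D A[[x]]$ is regular for every finite purely inseparable $K'/K$. Writing $K'$ as generated over $K$ by $p$-power roots of elements of $K$ and clearing denominators, one obtains a module-finite $D$-subalgebra $C\subset K'$, integral over $D$, with $\Frac C=K'$; since $C$ is finitely presented over $D$ one has $C\otimes_D A[[x]]\cong (C\otimes_D A)[[x]]$, and inverting the image $S$ of $D\setminus\{0\}$ identifies $K'\otimes_D A[[x]]$ with $S^{-1}\bigl((C\otimes_D A)[[x]]\bigr)$. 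So it suffices to prove that $(C\otimes_D A)[[x]]$ is regular at every prime lying over the generic point of $D$. This is where the hypotheses really enter: $C\to C\otimes_D A$ is a regular homomorphism (base change of $D\to A$), the residue fields of $C$ at its closed points are purely inseparable — hence perfect — extensions of the perfect residue fields of $D$, and $K'\otimes_D A=K'\otimes_K(K\otimes_D A)$ is regular because $K\otimes_D A$ is geometrically regular over $K$ (again base change of $D\to A$). Combining these via flat-local-homomorphism arguments of the type used above, together with the identification of the closed fibres of $C\otimes_D A\to (C\otimes_D A)[[x]]$ over maximal ideals with power-series rings over fields, one concludes that the singular locus of $(C\otimes_D A)[[x]]$ is confined to the fibres over the closed points of $D$, all of which are killed by $S^{-1}$. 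I expect this last disentanglement — tracking the singular locus through the completion and then through the localization, so as to rule out new $p$-th roots appearing in the residue fields of primes lying over $\Frac D$ — to be the main obstacle.

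Granting this, $D\to A[[x]]$ is geometrically regular. Since $A[[x]]\to A((x))=A[[x]][x^{-1}]$ is a localization, hence a regular homomorphism, and regular homomorphisms compose, $D\to A((x))$ is geometrically regular too, which completes the plan.
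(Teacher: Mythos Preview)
Your treatment of Noetherianity, flatness, and the fibres over the closed points of $D$ is correct and in fact a bit cleaner than the paper's: you identify the entire fibre as $(A/mA)[[x]]$ and invoke regularity of power series over a regular ring, whereas the paper works prime-by-prime in $B=A[[x]]$, choosing a maximal ideal $m=n+xB$ above a given $q$ and checking that the image of a uniformizer $\pi$ is part of a regular system of parameters in $B_m$.

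The obstacle you anticipate at the generic fibre comes from a misreading of the hypothesis. In the paper, ``Dedekind ring with perfect residue fields'' means that $k(\mathfrak p)$ is perfect for \emph{every} prime $\mathfrak p$ of $D$, including $\mathfrak p=(0)$; the proof uses this explicitly (``Since $k(p)$ is perfect by assumption'') for both cases. With $K=\Frac D$ perfect there are no nontrivial purely inseparable extensions $K'/K$, and since $K\otimes_D A[[x]]$ is a localization of the regular ring $A[[x]]$, it is automatically geometrically regular over $K$. Your detour through a module-finite $C\subset K'$ is therefore unnecessary for the lemma as stated.

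If you do want the stronger statement with only the closed residue fields of $D$ assumed perfect, your outline can be completed, but you need $C$ to be simultaneously finite projective over $D$ (so that $C\otimes_D A[[x]]\cong(C\otimes_D A)[[x]]$) and regular (so that the base change $C\to C\otimes_D A$ of the regular morphism $D\to A$ has regular target, whence $(C\otimes_D A)[[x]]$ is regular \emph{everywhere}, with no singular locus to track). Taking $C$ to be the integral closure of $D$ in $K'$ gives regularity by Krull--Akizuki, and finiteness holds whenever $D$ is Japanese (e.g.\ excellent); under that mild extra hypothesis your ``main obstacle'' dissolves.
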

\begin{proof}
Since $A$ is Noetherian, the ring $A[[x]]$ is flat over $A$, and hence its localization $A((x))$ is also flat over $A$.
Then both these rings are flat over $D$. Since $A$ is Noetherian and regular, both these rings are also Noetherian
and regular.
Set $B=A[[x]]$ for brevity, and denote by $\phi:D\to A$ the structure morphism of $A$ over $D$.
It remains to check that for every prime ideal $p$ of $D$, and every prime ideal $q$ of $B$ lying over $p$,
$B_q/\phi(p)B_q=k(p)\otimes_B B_q$
is a geometrically regular $k(p)$-algebra. Since $k(p)$ is perfect by assumption, it is enough to know that
$B_q/\phi(p)B_q$ is a regular local ring.
Now if $\phi^{-1}(q\cap A)=p=0$, then $B_q$ contains the field $D_p=k(p)$, and hence $k(p)\otimes_B B_q=B_q$,
which is obviously regular. If $\phi^{-1}(q\cap A)=p=(\pi)$, where $\pi$ is a prime element of $D$, then let
$n$ be a maximal ideal of $A$ containing $\phi(\pi)$,
and let $m=n+xB$ be the corresponding maximal ideal of $B$. Since $k(p)\otimes_A A_n=A_n/\phi(p)A_n$ is regular, $\phi(\pi)$ is a
regular element of $A_n$. Hence $\phi(\pi)$ is a regular element of $B_m=A_n[[x]]$, and hence $B_m/\phi(p)B_m$ is also regular.
Then $B_q/\phi(p)B_q=(B_m/\phi(p)B_m)_{q/\phi(p)B_m}$ is also regular.
This shows that $A[[x]]$ is geometrically regular over $D$.
Since $A((x))$ is a localization of $A[[x]]$, it follows that $A((x))$ is also geometrically regular over $D$.
\end{proof}

\begin{proof}[Proof of Theorem~\ref{thm:ALau-inj}]
By Theorem~\ref{thm:A1} we have
$$
K_1^G(A[x_2,\ldots,x_n][x_1])=K_1^G(A[x_2,\ldots,x_n]).
$$
Now if $n=1$, the claim of the theorem follows from~Lemma~\ref{lem:iso}. The rest of the claim
we prove by induction on $n$.

Since $A[x_2^{\pm 1},\ldots,x_n^{\pm 1}]=A[x_2,\ldots,x_n]_{x_2\ldots x_n}$ is a localization
of $A[x_2,\ldots,x_n]$,
this implies that
$$
K_1^G(A[x_2^{\pm 1},\ldots,x_n^{\pm 1}][x_1])=K_1^G(A[x_2^{\pm 1},\ldots,x_n^{\pm 1}])
$$
by~\cite[Lemma 4.6]{St-poly} (or~\cite[Lemma 4.2]{Abe}).
Hence by Lemma~\ref{lem:iso} the map
\begin{equation}\label{eq:map}
K_1^G(A[x_1^{\pm 1},\ldots,x_n^{\pm 1}])\to K_1^G\bigl(A[x_2^{\pm 1},\ldots,x_n^{\pm 1}]((x_1))\bigr)
\end{equation}
is an isomorphism. Since the map~\eqref{eq:map}
factors through the map
$$
K_1^G(A[x_1^{\pm 1},\ldots,x_n^{\pm 1}])\to K_1^G\bigl(A((x_1))[x_2^{\pm 1},\ldots,x_n^{\pm 1}]\bigr),
$$
the latter map is also injective.

Now if $A$ is a Dedekind ring, then $A[[x_1]]$ is a regular ring of dimension 2, and, since $x_1$ belongs to every maximal
ideal of $A[[x_1]]$, we conclude that $A((x_1))$ is a regular ring of dimension $1$, hence also Dedekind.
If $A$ is Noetherian and geometrically regular over $D$,
by Lemma~\ref{lem:A[[x]]} the ring $A((x_1))$ is also Noetherian and
geometrically regular over $D$. Summing up, the induction assumption applies to $A((x_1))$, and we are done.
\end{proof}

For the proof of Theorem~\ref{thm:sPID} we need the following lemma which follows from
the stability theorems of M. R. Stein and E. B. Plotkin~\cite{Ste78,Plo93}.

\begin{lem}\label{lem:stability}\cite[Lemma 3.1]{St-ded}
Let $R$ be a Noetherian ring of Krull dimension $\le 1$. If $\SL_2(R)=E_2(R)$, then
$G(R)=E(R)$ for any simply connected Chevalley--Demazure group scheme $G$.
\end{lem}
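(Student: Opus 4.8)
The plan is to invoke the surjective stability theorems for $K_1^G$ of Chevalley groups, so as to reduce the assertion, for an arbitrary simply connected $G$, to the special linear and symplectic groups, where it follows from the hypothesis $\SL_2(R)=E_2(R)$ together with the classical stability theorems of Bass and Vaserstein. First, since $G\mapsto G(R)$ and the elementary subgroup functor both commute with direct products of group schemes, we may assume that $G$ is simple, with irreducible root system $\Phi$ of rank $\ell$. If $\ell=1$, then $G\cong\SL_2$ and the claim $G(R)=E(R)$ is exactly the hypothesis. So assume $\ell\ge 2$. As $R$ is Noetherian of Krull dimension $\le 1$, both its Bass stable rank $\operatorname{sr}(R)$ and its absolute stable rank $\operatorname{asr}(R)$ are at most $2$. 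If $\Phi$ has type $A_\ell$, then $G\cong\SL_{\ell+1}$, and since $SK_1(R)$ is a quotient of $\SL_2(R)/E_2(R)=1$ it vanishes; by Bass's stability (valid because $\operatorname{sr}(R)\le 2$) it follows that $\SL_n(R)=E_n(R)$ for all $n\ge 2$, in particular $G(R)=E(R)$.

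For the remaining types we argue by induction on $\ell$, cutting the rank down one node at a time. Suppose $\ell\ge 3$, and delete a leaf of the Dynkin diagram of $\Phi$ to obtain a connected subdiagram of rank $\ell-1$; let $\Phi'\subsetneq\Phi$ be the corresponding irreducible subsystem and $G'\le G$ the derived subgroup of the standard Levi subgroup with root system $\Phi'$. Since $G$ is simply connected, so is $G'$, and it is the Chevalley group of type $\Phi'$. Because $\operatorname{asr}(R)\le 2$ is small enough relative to $\ell$, the surjective stability theorems of M. R. Stein~\cite{Ste78} (for classical $\Phi$) and of E. B. Plotkin~\cite{Plo93} (for exceptional $\Phi$) apply and yield $G(R)=E(R)\cdot G'(R)$; by the induction hypothesis $G'(R)$ coincides with its elementary subgroup $E'(R)$, and since the inclusion $G'\hookrightarrow G$ carries root unipotents to root unipotents, $E'(R)\subseteq E(R)$, whence $G(R)=E(R)$. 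Iterating, each of the types $B_\ell$, $D_\ell$, $E_6$, $E_7$, $E_8$, $F_4$ is reduced --- possibly after several steps --- either to a root system of type $A$, disposed of in the first paragraph, or to one of the two rank-$2$ types $C_2$ and $G_2$. For $C_2\cong B_2$, i.e. $G=\Sp_4$, symplectic surjective stability (applicable since $\operatorname{sr}(R)\le 2$) gives $\Sp_4(R)=Ep_4(R)\cdot\Sp_2(R)$, and $\Sp_2(R)=\SL_2(R)=E_2(R)\subseteq Ep_4(R)$, so $\Sp_4(R)=Ep_4(R)$. For $G=G_2$, one applies Plotkin's theorem once more to a maximal parabolic $P\le G_2$, whose Levi $L$ has semisimple rank $1$: this gives $G_2(R)=E(R)\cdot L(R)$, and as the maximal torus of $G_2$ lies in $E(R)$ and the derived subgroup of $L$ is, up to a central isogeny, $\SL_2$ with $\SL_2(R)=E_2(R)\subseteq E(R)$, we conclude $G_2(R)=E(R)$.

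The one genuinely delicate point is the bookkeeping of the numerical hypotheses: Stein's and Plotkin's theorems require $\operatorname{asr}(R)$ --- or, for the classical types, $\operatorname{sr}(R)$ --- to be bounded by a quantity that depends on $\rank\Phi$, and one must check that the bound $\le 2$ supplied by $\dim R\le 1$ really does suffice at every reduction step, in particular that it suffices to collapse each irreducible root system of rank $2$ --- most delicately $G_2$, which is contained in no larger exceptional root system and so cannot be reached by the induction, but must be reduced directly. Granting the precise forms of~\cite{Ste78} and~\cite{Plo93}, all remaining verifications are routine.
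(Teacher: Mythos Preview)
Your proposal is correct and follows exactly the approach the paper indicates: the paper does not actually prove this lemma but cites it from~\cite[Lemma 3.1]{St-ded} and notes that it ``follows from the stability theorems of M.~R.~Stein and E.~B.~Plotkin~\cite{Ste78,Plo93}''; your argument spells out precisely this deduction, reducing to rank~$2$ (and ultimately to $\SL_2$) via surjective stability. The only imprecisions are cosmetic --- in the $G_2$ step the derived Levi is literally $\SL_2$ (not merely up to isogeny), since $G$ is simply connected, and the phrase ``contained in no larger exceptional root system'' should read ``does not arise as a Levi subdiagram of a larger Dynkin diagram'' --- but these do not affect the validity of the argument.
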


\begin{proof}[Proof of Theorem~\ref{thm:sPID}]
By Theorem~\ref{thm:A1} we have
$$
K_1^G(D[x_1,\ldots,x_n,y_1,\ldots,y_m])=K_1^G(D[x_1,\ldots,x_n]).
$$
Since $D[x_1^{\pm 1},\ldots,x_n^{\pm 1}]=D[x_1,\ldots,x_n]_{x_1\ldots x_n}$ is a localization
of $D[x_1,\ldots,x_n]$,
by~\cite[Lemma 4.6]{St-poly} (or~\cite[Lemma 4.2]{Abe})
this implies that
$$
K_1^G(D[x_1^{\pm 1},\ldots,x_n^{\pm 1}][y_1,\ldots,y_m])=K_1^G(D[x_1^{\pm 1},\ldots,x_n^{\pm 1}]).
$$
By Theorem~\ref{thm:ALau-inj} the map
$$
K_1^G(D[x_1^{\pm 1},\ldots,x_n^{\pm 1}])\to K_1^G\bigl(D((x_1))\ldots((x_n))\bigr)
$$
is injective. By Lemma~\ref{lem:spPID} $A=D((x_1))\ldots((x_n))$ is a special PID. By definition, it means
that $\SL_2(A)=E_2(A)$. Then by Lemma~\ref{lem:stability} we have $K_1^G(A)=1$ for any
simply connected Chevalley--Demazure group scheme of isotropic rank $\ge 2$. This finishes the proof.
\end{proof}

\renewcommand{\refname}{References}


\begin{thebibliography}{MMMM}

\bibitem[A]{Abe} E. Abe, \emph{Whitehead groups of Chevalley groups over polynomial rings}, Comm. Algebra {\bf 11} (1983),
1271--1307.




\bibitem[Che]{Che55}
C.~Chevalley, \emph{Sur certains groupes simples}, Tohoku Math. J. \textbf{7}
  (1955), 14--66.

\bibitem[Chu]{Chu} Huah Chu, \emph{On the $GE_2$ of graded rings}, J. Algebra \textbf{90} (1984),
208--216.








\bibitem[SGA3]{SGA3} M.~Demazure, A.~Grothendieck, \emph{Sch\'emas en groupes}, Lecture Notes in
Mathematics, vol. 151--153, Springer-Verlag, Berlin-Heidelberg-New York, 1970.





\bibitem[GSt]{GSt} P. Gille, A. Stavrova, \emph{$R$-equivalence
 on group schemes and non-stable $K^1$-functors},
\href{http://arxiv.org/abs/2107.01950}{arXiv:2107.01950}, 46 pp.





\bibitem[Gru]{Gruson} L. Gruson, \emph{Une propri\'et\'e des couples hens\'eliens},
 Colloque d'Alg\`ebre Commutative (Rennes, 1972), Exp. No. 10,
13 pp; Publ. S\'em. Math. Univ. Rennes,  1972.






\bibitem[Ko95]{Kop-lau1}
V. I. Kopeiko, \emph{On
the structure of the special linear groups over Laurent polynomial rings}.
(Russian. English summary)  Fundam. Prikl. Mat., 1:4 (1995), 1111--1114.

\bibitem[Ko99]{Kop-lau2} 
V. I. Kopeiko, \emph{Symplectic groups over Laurent polynomial rings and patching diagrams}.
(Russian. English summary) Fundam. Prikl. Mat. 5, No. 3, 943-945 (1999).

\bibitem[Lam]{Lam-book06}
T.~Y. Lam, \emph{Serre's problem on projective modules}, Springer Monographs in
  Mathematics, Springer-Verlag, Berlin, 2006.




\bibitem[Ma]{Ma} H.~Matsumoto, \emph{Sur les sous-groupes
arithm\'etiques des groupes semi-simples d\'eploy\'es},
Ann. Sci.  de l'\'E.N.S. $4^e$ s\'erie, tome 2, n. 1 (1969), 1--62.


\bibitem[PSt]{PS} V. Petrov, A. Stavrova, \emph{Elementary subgroups of isotropic reductive groups},
St. Petersburg Math. J. {\bf 20} (2009), 625--644.

\bibitem[Plo]{Plo93}
E.~B. Plotkin, \emph{Surjective stabilization of the {$K_1$}-functor for some
  exceptional {C}hevalley groups}, Journal of Soviet Mathematics \textbf{64}
  (1993), no.~1, 751--766.



\bibitem[Pop]{Po90} D. Popescu, \emph{Letter to the Editor: General N\'eron desingularization and approximation},
Nagoya Math. J. {\bf 118} (1990), 45--53.






\bibitem[St14]{St-poly} A. Stavrova, \emph{Homotopy invariance of non-stable $K_1$-functors}, J. K-Theory
{\bf 13} (2014), 199--248.


\bibitem[St15]{St-serr} A. Stavrova,
\emph{Non-stable $K_1$-functors of multiloop groups}, Canad. J. Math. {\bf 68} (2016), 150--178.


\bibitem[St20]{St-ded} A. Stavrova,
\emph{Chevalley groups of polynomial rings over Dedekind domains}, {\it J. Group Theory} {\bf 23} (2020), 121--132.

\bibitem[St22]{St-k1}  A. Stavrova,
 \emph{$\mathbb{A}^1$-invariance of non-stable $K_1$-functors in the equicharacteristic case},
Indag. Math. \textbf{33} (2022), no.~2, 322--333.



\bibitem[Ste78]{Ste78}
M.~R. Stein, \emph{Stability theorems for {$K_{1}$}, {$K_{2}$} and related
  functors modeled on {C}hevalley groups}, Japan. J. Math. (N.S.) \textbf{4}
  (1978), no.~1, 77--108.

\bibitem[Ste13]{Stepanov-elem}
A. Stepanov, \emph{Elementary calculus in {C}hevalley groups over rings},
  J. Prime Res. Math. \textbf{9} (2013), 79--95.



\bibitem[Su]{Sus} A. A.~Suslin,
\emph{On the structure of the special linear group over polynomial rings}, Math. USSR Izv. {\bf 11}
(1977), 221--238.

\bibitem[SuKo]{Sus-K-O1} A.A. Suslin, V.I. Kopeiko, \emph{Quadratic modules and the orthogonal group over polynomial rings},
J. of Soviet Math. {\bf 20} (1982), 2665--2691.


\bibitem[Sw]{Swan} R. G. Swan, \emph{N\'eron-Popescu desingularization}, in Algebra and Geometry (Taipei, 1995),
Lect. Alg. Geom. {\bf 2} (1998), 135--198. Int. Press, Cambridge, MA.

\bibitem[Tad]{Tad} G.~Taddei, \emph{Normalit\'e des groupes \'el\'ementaires dans les groupes de
{C}hevalley sur un anneau}, Contemp. Math \textbf{55} (1986), 693--710.




\end{thebibliography}
\end{document}